\newtheorem{thm}{Theorem}[section]
\newtheorem{theorem}{Theorem}[section]
\newtheorem{proposition}[thm]{Proposition}
\theoremstyle{definition}
\newtheorem{definition}[thm]{Definition}
\theoremstyle{remark}
\numberwithin{equation}{section}
\newcommand{\A}{\mathcal{A}}
\newcommand{\N}{\ensuremath{\mathbb{N}}}
\newcommand{\norm}{\ensuremath{\mathsf{norm}}}
\newcommand{\R}{\ensuremath{\mathbb{R}}}
\newcommand{\C}{\ensuremath{\mathcal{C}}}
\begin{document}

\title[]{From A-spaces to arbitrary spaces via spatial fibrous preorders}%
\author{N. Martins-Ferreira}%
\address{Escola Superior de Tecnologia e Gest\~ao\\ Centro Para o Desenvolvimento R\'apido e Sustentado do Produto\\ Instituto Poli\-t\'ecnico de Leiria, Leiria,
Portugal}
\email{martins.ferreira@ipleiria.pt}%

\thanks{Research supported by IPLeiria(ESTG/CDRSP) and FCT grant SFRH/BPD/43216/2008. Also by the FCT projects: PTDC/EME-CRO/120585/2010, PTDC/MAT/120222/2010 and PEst-OE/EME/UI4044/2013.}%
\subjclass{18B35 \and 54H99 \and 16B50}%
\keywords{preorder \and quasiorder \and topological space \and
fibrous preorder \and fibrous morphism \and spatial fibrous preorder
\and equivalence of
categories}%

\begin{abstract}
The well known equivalence between preorders and Alexandrov spaces
is extended to an equivalence between arbitrary topological spaces
and spatial fibrous preorders, a new notion to be introduced.
\end{abstract}
\maketitle
\section{Introduction}
\label{intro} In modern terms, the main result in \cite{Alex}
establishes a categorical equivalence between preorders and
A-spaces. A preorder is simply a reflexive and transitive relation
while an A-space is a topological space in which any intersection of
open sets is open. The later trivially holds for finite topological
spaces and the equivalence between finite topological spaces and
finite preorders was used in \cite{JS1,JS2} to answer to open
problems in topological descent theory. In \cite{Erne}, p.61, Ern\'e
writes \emph{"Hence the question arises: How can we enlarge the
category of A-spaces on the one hand and the category of
quasiordered sets on the other hand, so that we still keep an
equivalence between the topological and the order-theoretical
structures, but many interesting 'classical' topologies are included
in the extended definition?"} and proposes the two notions of
B-space and C-space \cite{Erne}.

With a different motivation, and not being restricted to the
\emph{order-theoretical structures}, we propose a new structure,
which we call fibrous preorder, and generalizes the one of a
preorder. With the appropriate morphisms, called fibrous morphisms,
and a suitable equivalence between them, we observe that the
category Top, of topological spaces and continuous maps, is sitting
in the middle
\[\xymatrix{Preord\ar[r]&Top\ar[r]&FibPreord}\]
of the category of preorders and the category of fibrous preorders.

The main result of this work is the description of the subcategory
of fibrous preorders which is equivalent to the category of
topological spaces. Inspired by what is called spatial frames in
point-free-topology (see e.g. \cite{PP}), the fibrous preorders
arising in this way are called spatial fibrous preorders.

A fibrous preorder is a generalization of a preorder. It was
obtained while looking for a simple description of topological
spaces in terms of internal categorical structures. By an internal
categorical structure we mean a structure which can be defined in an
arbitrary category with finite limits --- as for instance the notion
of internal category or internal groupoid, internal preorder or
internal equivalence relation. A detailed description on these
topics can be found for instance in \cite{BB}.

This work is organized as follows: in the section \ref{secfibpre} we
describe the category of fibrous preorders, by defining its objects
and morphisms and an equivalence relation on each hom-set of fibrous
morphisms; that induces an equivalence between fibrous preorders
that we make explicit; at the end we recover the classical
Alexandrov theorem stating that every A-space is equivalent to a
preorder. In section \ref{main} we introduce the notion of spacial
fibrous preorder and prove that (up to equivalence) it defines a
subcategory of the category of fibrous preorders and moreover that
it is isomorphic to the category of topological spaces. In section
\ref{examples} we provide some examples to illustrate how the use of
spacial fibrous preorders can be used to work with topological
spaces described by systems of open neighbourhoods.

\section{Fibrous preorders and fibrous morphisms}\label{secfibpre}

The following definition is a generalization of the notion of
preorder, i.e. a reflexive and transitive relation. The word fibrous
is a derivation of the word fibre and it is motivated by the
presence of a morphism $p\colon{A\to B}$ (see below), suggesting
that $A$ may be considered as a fibre over the base $B$, moreover
when $p$ is an isomorphism the classical notion of preorder is
recovered.

\begin{definition}\label{defA} A fibrous preorder is a sequence \[\xymatrix{R\ar[r]^{\partial}&A\ar[r]^{p}&B}\] in
which $A$ and $B$ are sets, $p$ and  $\partial$ are maps,
$R\subseteq A\times B$ is a binary relation (and as usual we simply
write $(a,b)\in R$ as $aRb$) such that the following conditions
hold:
\begin{enumerate}
\item[(F1)] $p\partial(a,b)=b$;
\item[(F2)] $aRp(a)$;
\item[(F3)] $\partial(a,b)Ry\Rightarrow aRy$;
\end{enumerate}
for every $a\in A$ and $b,y\in B$ with $aRb$.\end{definition}

\begin{definition}\label{defB}
Let $\mathbf{A}=(\xymatrix{R\ar[r]^{\partial}&A\ar[r]^{p}&B})$ and
$\mathbf{A'}=(\xymatrix{R'\ar[r]^{\partial'}&A'\ar[r]^{p'}&B'})$ be
two fibrous preorders. A fibrous morphism between $\mathbf{A}$ and
$\mathbf{A'}$ is a pair $(f,f^{*})$ with $f\colon{B\to B'}$ a map
from $B$ to $B'$ and $f^{*}\colon{A'_{f}\to A}$ a map from
\[A'_f=\{(a',b)\in A'\times B\mid p'(a')=f(b)\}\] to A such that
\begin{equation}\label{F7}pf^{*}(a',b)=b\end{equation} and
\begin{equation}\label{F8}f^{*}(a',b)Ry\Rightarrow a'R'f(y)\end{equation} for all $a'\in A'$ and $b,y\in
B$ with $p'(a')=f(b)$.
\end{definition}

In other words a fibrous morphism from $\mathbf{A}$ to $\mathbf{A'}$
is a map $f$ from $B$ to $B'$ together with a span
\[\xymatrix{A'&A'_f\ar[l]_{\pi_1}\ar[r]^{f^{*}}&A}\] such that the
following diagram (in which the top line is to be considered as a
single composite arrow) is a pullback diagram
\[\xymatrix{A'_f\ar[r]^{f^{*}}\ar[d]_{\pi_1}&A\ar[r]^{p}&B\ar[d]^{f}\\A'\ar[rr]^{p'}&&B'}\]
and moreover the condition $(\ref{F8})$ is satisfied. Note that the
commutativity of the previous diagram is equivalent to condition
$(\ref{F7})$.

Now, if $(g,g^*)$ is another fibrous morphism, say from
$\mathbf{A'}$ to $\mathbf{A''}$, then the composition
$(g,g^{*})\circ(f,f^{*})$ is computed as
\[(g,g^{*})\circ(f,f^{*})=(gf,f^{*}g_{f}^{*})\] with
$f^{*}g_{f}^{*}(a'',b)=f^{*}(g^{*}(a'',f(b)),b)$. The following
diagram illustrates how the above formula is obtained (simply
complete the diagram by inserting the upper left pullback square)
and it also shows that this formula is associative up to a canonical
isomorphism of pullbacks.
\[\xymatrix{A''_{gf}\ar[r]^{g_{f}^{*}}\ar[d]_{\pi_1}\ar@{}[dr]|{p.b.}&A'_f\ar[r]^{f^{*}}\ar[d]_{\pi_1}&A\ar[r]^{p}&B\ar[d]^{f}\\A''_g\ar[r]^{g^{*}}\ar[d]_{\pi_1}&A'\ar[rr]^{p'}&&B'\ar[d]^{g}\\A''\ar[rrr]^{p''}&&&B''}\]

It is a simple calculation to check that it is well defined, that
is, condition $(\ref{F8})$ is satisfied.

We will consider the category FibPreord of fibrous preorders and
fibrous morphisms with the following identification of parallel
fibrous morphisms.

\begin{definition}\label{defC} Two parallel fibrous morphisms $(f,f^{*})$ and  $(g,g^{*})$  are said to be equivalent if and only if
$f=g$.
\end{definition}

This equivalence of morphisms immediately gives the following
equivalence between two objects: we identify two fibrous preorders
whenever they have the same base object and the identity map is
fibrous in both directions.

\begin{proposition}Two fibrous preorders $(R,A,B,p,\partial)$ and
$(R',A',B',p',\partial')$ are equivalent
\[(R,A,B,p,\partial)\sim(R',A',B',p',\partial)\] if and only if $B=B'$ and there
exist two maps
\[\xymatrix{A\ar@<.5ex>[r]^{\varphi}\ar[d]_{p}&A'\ar@<.5ex>[l]^{\gamma}\ar[d]^{p'}\\B\ar@{=}[r]&B'}\]
such that $p'\varphi=p$, $p\gamma=p'$ and
\begin{equation}\label{F9}\varphi(a)R'b\Rightarrow aRb\end{equation}\begin{equation}\label{F10}\gamma(a')Rb\Rightarrow
a'R'b\end{equation}for every $a\in A$, $a'\in A'$ and $b\in B$.
\end{proposition}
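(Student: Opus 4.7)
The plan is to observe that the statement is a direct unpacking of the preceding definitions, once we make explicit what a fibrous morphism whose base component is an identity map looks like. Concretely, the equivalence $\mathbf{A} \sim \mathbf{A'}$ is defined as: $B = B'$ together with the existence of a fibrous morphism $\mathbf{A} \to \mathbf{A'}$ and a fibrous morphism $\mathbf{A'} \to \mathbf{A}$ whose underlying maps on bases are both $\mathrm{id}_B$. I will therefore analyse what such an ``identity-based'' fibrous morphism amounts to, in each direction, and show that the two pieces of data coincide exactly with the pair $(\varphi,\gamma)$ described in the statement.

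First, I would note that in Definition \ref{defB}, when the base map is $f=\mathrm{id}_B$, the pullback
\[A'_{\mathrm{id}_B}=\{(a',b)\in A'\times B\mid p'(a')=b\}\]
is canonically in bijection with $A'$ via $(a',b)\mapsto a'$, since $b$ is forced to equal $p'(a')$. Under this identification, $f^{*}\colon A'_{\mathrm{id}_B}\to A$ is the same datum as a single map $\gamma\colon A'\to A$; condition (\ref{F7}) becomes $p\gamma(a')=p'(a')$, i.e.\ $p\gamma=p'$; and condition (\ref{F8}), read with $a'\in A'$ and $y\in B$, becomes precisely the implication (\ref{F10}). Thus fibrous morphisms $\mathbf{A}\to\mathbf{A'}$ with base $\mathrm{id}_B$ are in bijection with maps $\gamma\colon A'\to A$ satisfying $p\gamma=p'$ and (\ref{F10}).

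Second, by an entirely symmetric argument, fibrous morphisms $\mathbf{A'}\to\mathbf{A}$ with base $\mathrm{id}_B$ correspond bijectively to maps $\varphi\colon A\to A'$ with $p'\varphi=p$ satisfying (\ref{F9}). Combining the two directions proves both implications of the proposition: from $\mathbf{A}\sim\mathbf{A'}$ we extract $\gamma$ and $\varphi$ from the two fibrous morphisms given by the definition; conversely, from a pair $(\varphi,\gamma)$ as in the statement we define $f^{*}(a',b):=\gamma(a')$ and $g^{*}(a,b):=\varphi(a)$, which manifestly satisfy (\ref{F7}) and (\ref{F8}) and hence yield the two required fibrous morphisms.

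The only point that requires attention — and it is really bookkeeping rather than a genuine obstacle — is tracking the orientation of $\varphi$ versus $\gamma$ against the two directions of fibrous morphism, so that (\ref{F9}) ends up paired with the morphism $\mathbf{A'}\to\mathbf{A}$ and (\ref{F10}) with the morphism $\mathbf{A}\to\mathbf{A'}$. Once the pullback collapses to $A'$ (respectively $A$) via the identity base map, the rest is a direct rewrite of conditions (F7) and (F8) from Definition \ref{defB}.
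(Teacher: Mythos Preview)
Your proposal is correct and follows exactly the approach implicit in the paper, whose proof is simply ``Straightforward.'' You have spelled out precisely the intended unpacking: when the base map is $\mathrm{id}_B$, the pullback $A'_{\mathrm{id}_B}$ collapses to $A'$, so a fibrous morphism $\mathbf{A}\to\mathbf{A'}$ over $\mathrm{id}_B$ is the same as a map $\gamma\colon A'\to A$ with $p\gamma=p'$ satisfying (\ref{F10}), and symmetrically for $\varphi$; your orientation bookkeeping is also right.
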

\begin{proof} Straightforward.
\end{proof}

A preorder $(B,\leq)$ is in particular a fibrous preorder with
$A=B$, $p=1_B$, $xRy$ if and only if $x\leq y$ and
$\partial(x,y)=y$. In fact we have more.

\begin{proposition}\label{prop_umap}
There is an embedding of the category of preorders into the category
of fibrous preorders and moreover an object $(R,A,B,p,\partial)$ is
(up to equivalence) in the image of the embedding if and only if
there exists a map \[u\colon{B\to A}\] such that $pu=1_B$ and
\[up(a)Ry\Rightarrow aRy,\] for every $a\in A$ and $b\in B$.
\end{proposition}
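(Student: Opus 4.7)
The plan is to first construct the embedding $E\colon\mathrm{Preord}\to\mathrm{FibPreord}$ explicitly and then argue both implications of the characterization separately.

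For the embedding, I would send a preorder $(B,\leq)$ to the fibrous preorder with $A=B$, $p=1_B$, $xRy\iff x\leq y$, and $\partial(x,y)=y$; the three axioms (F1), (F2), (F3) translate respectively to a tautology, reflexivity, and transitivity of $\leq$, so $E$ is well defined on objects. A monotone map $f\colon(B,\leq)\to(B',\leq')$ would be sent to the pair $(f,f^{\ast})$ where, using that $p'=1_{B'}$ forces $A'_f=\{(f(b),b)\mid b\in B\}$, I set $f^{\ast}(f(b),b)=b$. Condition (\ref{F7}) is immediate and (\ref{F8}) is exactly monotonicity of $f$. Faithfulness of $E$ modulo Definition \ref{defC} is trivial since the underlying map on $B$ is $f$ itself; fullness follows because any fibrous morphism between objects in the image satisfies $b\leq y\Rightarrow f(b)\leq' f(y)$ by reading (\ref{F8}) on this special shape of $A'_f$.

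For the characterization, the easy direction is that if $(R,A,B,p,\partial)\sim E(B,\leq)$, the equivalence supplies a map from $B=B'$ to $A$ which, composed appropriately, yields the desired $u$; more directly, in $E(B,\leq)$ one has the section $u=1_B$ of $p$ satisfying the hypothesis trivially, and the equivalence of objects transfers it. For the substantive direction, assume $u\colon B\to A$ exists with $pu=1_B$ and $up(a)Ry\Rightarrow aRy$. I would define a binary relation on $B$ by
\[x\leq y\iff u(x)Ry.\]
Reflexivity is immediate from (F2) applied to $u(x)$, since $p(u(x))=x$. For transitivity, given $u(x)Ry$ and $u(y)Rz$, set $a=\partial(u(x),y)$; by (F1) we have $p(a)=y$, so $up(a)=u(y)$, and the hypothesis on $u$ gives $\partial(u(x),y)Rz$; then (F3) yields $u(x)Rz$, i.e.\ $x\leq z$.

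Finally I would check that $(R,A,B,p,\partial)$ is equivalent, in the sense of the preceding proposition, to $E(B,\leq)$. Taking $\varphi=p\colon A\to B$ and $\gamma=u\colon B\to A$, the two commutation conditions $p'\varphi=p$ and $p\gamma=p'$ hold because $p'=1_B$ and $pu=1_B$. Condition (\ref{F9}) says $p(a)\leq b\Rightarrow aRb$, i.e.\ $u(p(a))Rb\Rightarrow aRb$, which is exactly the assumed property of $u$; condition (\ref{F10}) says $u(a')Rb\Rightarrow a'\leq b$, which is the definition of $\leq$. The only nontrivial step in the entire argument is transitivity of the induced preorder, where the interaction between (F3) and the defining property of $u$ must be used through the intermediate element $\partial(u(x),y)$; the rest is bookkeeping on pullbacks and axioms.
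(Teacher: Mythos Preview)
Your proposal is correct and follows essentially the same route as the paper: the same embedding on objects, the same induced relation $x\leq y\iff u(x)Ry$, the same transitivity argument via $\partial(u(x),y)$ combined with (F3) and the hypothesis on $u$, and the same witnessing pair $\varphi=p$, $\gamma=u$ for the equivalence. You in fact supply more than the paper does: you spell out fullness and faithfulness of the embedding and you address the ``only if'' direction of the characterization (existence of $u$ when the object is equivalent to a preorder), both of which the paper leaves implicit.
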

\begin{proof} Clearly if $(B,\leq)$ is a preorder then $(R,A,B,p,\partial)$ with
$A=B$, $p=1_B=1_A$, $xRy$ if and only if $x\leq y$ and
$\partial(x,y)=y$ is a fibrous preorder. Also every  monotone map
gives a fibrous morphism.

Conversely, let $(R,A,B,p,\partial)$ be a fibrous preorder. If there
exists a map \[u\colon{B\to A}\] such that $pu=1_B$ and
\[up(a)Ry\Rightarrow aRy,\] for every $a\in A$ and $b\in B$, then it
is equivalent to \[(R^{\circ},B,B,1_B,\partial^{\circ})\] with
\[xR^{\circ}y\Leftrightarrow u(x)Ry\] and $\partial^{\circ}(x,y)=y$.
First let us observe that  $R^{\circ}$ is a preorder, so that
$(R^{\circ},B,B,1_B,\partial^{\circ})$ is well defined. Indeed
$xR^{\circ}x\Leftrightarrow u(x)Rx\Leftrightarrow u(x)Rpu(x)$ which
holds by condition $(F2)$. This proves reflexivity. For
transitivity, suppose we have $xR^{\circ}y$ and $yR^{\circ}z$ that
is $u(x)Ry$ and $u(y)Rz$, hence we also have
$up(\partial(u(x),y))Rz$ and then it follows $\partial(u(x),y)Rz$
from which we conclude $u(x)Rz$, proving that $xR^{\circ}z$ as
desired. It is also clear that we have maps
\[\xymatrix{A\ar@<.5ex>[r]^{p}\ar[d]_{p}&B\ar@<.5ex>[l]^{u}\ar[d]^{1_B}\\B\ar@{=}[r]&B}\]
with \[p(a)R^{\circ}b\Rightarrow up(a)Rb\Rightarrow aRb\] and
$u(x)Rb\Rightarrow xR^{\circ}b$.
\end{proof}

There is also a functor from the category of topological spaces to
the one of fibrous preorders which will be used in the next section.

\begin{proposition}\label{exampletopspaces}
If $(B,\tau)$ is a topological space then the structure
\begin{align*}
 A=\{(U,x)\mid x\in U \in \tau\}\\
 p(U,x)=x\\
 (U,x)Ry\Leftrightarrow y\in U\\
 \partial((U,x),y)=(U,y)
\end{align*}
defines a fibrous preorder. Moreover, if
$f\colon{(B,\tau)\to(B',\tau')}$ is a continuous map then
\[(f,f^{*})\colon{(R,A,B,p,\partial )\to(R',A',B',p',\partial' )},\] with
$f\colon{B\to B'}$ the underlying map and
\[f^{*}((U',x'),y)=(f^{-1}(U'),y)\] for all $(U',x')\in A'$ and
$y\in B$ with $x'=f(y)$, is a fibrous morphism between fibrous
preorders. \end{proposition}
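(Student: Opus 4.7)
The plan is to unwind the definitions and verify axioms (F1)--(F3) of Definition \ref{defA} for the proposed fibrous preorder associated with $(B,\tau)$, and then verify conditions (\ref{F7})--(\ref{F8}) of Definition \ref{defB} for the proposed fibrous morphism associated with a continuous $f$. Since all data consist of concrete pairs $(U,x)$ with $x\in U\in\tau$, each axiom should reduce to an elementary membership check.

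For the first assertion I would first note that $\partial$ is well-defined on its intended domain: the hypothesis $(U,x)Ry$ means $y\in U$, so $(U,y)$ really does belong to $A$. Then (F1) reads $p\partial((U,x),y)=p(U,y)=y$; (F2) amounts to $x\in U$, which is built into the definition of $A$; and (F3) is the implication ``if $(U,b)Ry$ then $(U,x)Ry$'', which is tautological since both sides assert $y\in U$. So no substantive work is needed beyond naming the correct sets.

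For the morphism part, the key point I would single out is that $f^{*}$ actually lands in $A$. Given $((U',x'),b)\in A'_{f}$, the compatibility condition $p'(U',x')=f(b)$ forces $x'=f(b)\in U'$, so $b\in f^{-1}(U')$; continuity of $f$ then ensures $f^{-1}(U')\in\tau$, hence $(f^{-1}(U'),b)\in A$. This is the sole place where continuity is used, and it is the step I would flag as the main (if mild) obstacle: it is the only non-formal ingredient, and without it the construction would collapse.

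Granted that $f^{*}$ is well-defined, the remaining verifications are immediate. Condition (\ref{F7}) reads $pf^{*}((U',x'),b)=p(f^{-1}(U'),b)=b$. For condition (\ref{F8}), the hypothesis $f^{*}((U',x'),b)Ry$ says $y\in f^{-1}(U')$, i.e.\ $f(y)\in U'$, which by the definition of $R'$ is precisely $(U',x')R'f(y)$. I would finish by remarking, informally, that the assignment is plainly functorial via $(gf)^{-1}=f^{-1}g^{-1}$, matching the composition formula introduced in Section \ref{secfibpre}, although the statement to be proved only claims the object- and arrow-level parts.
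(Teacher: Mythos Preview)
Your proposal is correct and follows exactly the approach the paper intends: the paper's own proof is simply ``Straightforward verification'', and you have supplied precisely those verifications, including the well-definedness checks for $\partial$ and $f^{*}$ (the latter being the one place continuity enters). There is nothing to add or change.
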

\begin{proof}Straightforward verification.\end{proof}

In the next section we describe the fibrous preorders arising from a
topological space, called spatial fibrous preorders and prove that
the category of topological spaces is isomorphic to the category of
spatial fibrous preorders. Before that we illustrate how the
classical result of \cite{Alex} can be obtained via this new
setting.

\begin{proposition}[Alexandrov, \cite{Alex}]Let $(B,\tau)$ be a topological space and consider the fibrous
preorder, say $F(B,\tau)$, described in Proposition
\ref{exampletopspaces}. It is an Alexandrov space if and only if
$F(B,\tau)$ is equivalent to a preorder.\end{proposition}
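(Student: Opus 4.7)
The plan is to apply Proposition \ref{prop_umap} to the specific fibrous preorder $F(B,\tau)$ and translate the existence of the section $u$ into a topological statement about minimal open neighbourhoods. Concretely, an equivalence to a preorder is witnessed by a map $u\colon B\to A$ with $pu=1_B$ and $up(a)Ry\Rightarrow aRy$. Since $A=\{(U,x)\mid x\in U\in\tau\}$ and $p(U,x)=x$, such a $u$ amounts to a choice, for each $x\in B$, of an open set $U_x\in\tau$ with $x\in U_x$; and writing $a=(U,x)$, the implication $u(x)Ry\Rightarrow (U,x)Ry$ unfolds (via the definition $(V,z)Ry\Leftrightarrow y\in V$) to
\[
y\in U_x\ \Longrightarrow\ y\in U\qquad\text{for every }U\in\tau\text{ with }x\in U.
\]
In other words, $U_x\subseteq U$ for every open neighbourhood $U$ of $x$, so $U_x$ is the smallest open neighbourhood of $x$.

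First I would prove the forward direction. If $(B,\tau)$ is an Alexandrov space, then for each $x\in B$ the set
\[
U_x=\bigcap\{U\in\tau\mid x\in U\}
\]
is open, contains $x$, and is contained in every open neighbourhood of $x$. Setting $u(x)=(U_x,x)$ gives a well-defined map $u\colon B\to A$ with $pu=1_B$ satisfying the required implication, so by Proposition \ref{prop_umap} the fibrous preorder $F(B,\tau)$ is equivalent to a preorder.

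For the converse, suppose $u$ exists and write $u(x)=(U_x,x)$ as above; then each $U_x$ is a minimal open neighbourhood of $x$ as explained. Given an arbitrary family $\{V_i\}_{i\in I}\subseteq\tau$ with intersection $V=\bigcap_{i\in I}V_i$, I would argue that for every $x\in V$ the minimality property forces $U_x\subseteq V_i$ for all $i$, hence $U_x\subseteq V$; therefore $V=\bigcup_{x\in V}U_x$ is open, and $(B,\tau)$ is an Alexandrov space.

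I do not expect any serious obstacle: the only conceptual point is recognising that the section $u$ produced by Proposition \ref{prop_umap} is exactly a choice of minimal open neighbourhoods, which is the classical characterisation of Alexandrov spaces. The rest is a direct unpacking of definitions.
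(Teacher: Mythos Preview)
Your proposal is correct and follows exactly the same route as the paper: apply Proposition~\ref{prop_umap} and recognise that the section $u$ amounts to a choice of minimal open neighbourhoods, with the forward direction given by $u(x)=(\theta_x,x)$ where $\theta_x=\bigcap\{U\in\tau\mid x\in U\}$. The paper's proof only spells out this forward direction, so your treatment of the converse via $V=\bigcup_{x\in V}U_x$ is a welcome completion of what the paper leaves implicit.
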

\begin{proof}We only observe that if $(B,\tau)$ is an Alexandrov space then
there exists a map
\[u\colon{B\to A}\] assigning to each point $x\in B$ the element
$(\theta_x,x)\in A$ with $\theta_x$ the intersection of all open
neighbourhoods of $x$, moreover this map satisfies the requirements
of proposition \ref{prop_umap}.
\end{proof}

\section{The main result}\label{main}

The so called spatial frames are the frames that are isomorphic to
the topology of some space (see e.g. \cite{PP}). Here our main
result is the description of the full subcategory of fibrous
preorders and fibrous morphisms (with equivalent morphisms
identified) which is equivalent to the category of topological
spaces.

\begin{definition} A fibrous preorder $(\xymatrix{R\ar[r]^{\partial}&A\ar[r]^{p}&B})$ is said to be \emph{spatial} when there exists $s\colon{B\to A}$ and $m\colon{A\times_B A\to A}$ with
 $A\times_B A=\{(a,a')\in A\times A\mid p(a)=p(a')\}$ such that
\begin{enumerate}
\item[(F4)] $ps(y)=y$;
\item[(F5)] $pm(a,a')=p(a)=p(a')$;
\item[(F6)] $m(a,a')Ry \Rightarrow aRy \& a'Ry$;
\end{enumerate}
for every $a,a'\in A$ and $y\in B$ with
$p(a)=p(a')$.\end{definition}

Observe that because we are identifying fibrous morphisms as in
Definition \ref{defC}, the notion of spatial fibrous preorder is a
property of a given fibrous  preorder and not an extra structure.
That is to say that the category of spatial fibrous preorders and
(equivalent) fibrous morphisms is a full subcategory of FibPreord.

\begin{theorem}
There are functors $F$ and $G$ between the category of spatial
fibrous preorders and topological spaces
\[\xymatrix{SpFibPreord\ar@<.5ex>[r]^(.65){F}&Top\ar@<.5ex>[l]^(.35){G}}\] such that $FG=1$ and $GF\sim
1$.
\end{theorem}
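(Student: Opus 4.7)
The plan is to take $G\colon\mathrm{Top}\to\mathrm{SpFibPreord}$ to be the construction of Proposition \ref{exampletopspaces}; this is visibly spatial via $s(y)=(B,y)$ and $m((U,x),(V,x))=(U\cap V,x)$. The functor $F$ in the other direction is where the real content lies. For a spatial fibrous preorder $(R,A,B,p,\partial)$ and each $a\in A$, write $N_a=\{y\in B\mid aRy\}$, and declare $U\subseteq B$ to be open when for every $y\in U$ there is some $a\in A$ with $p(a)=y$ and $N_a\subseteq U$. Set $F(R,A,B,p,\partial)=(B,\tau)$ with this topology, and $F(f,f^{*})=f$ on morphisms.

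My first task is to verify that $\tau$ really is a topology, and this is precisely where the spatiality axioms (F4)--(F6) come in. Arbitrary unions are immediate; $B$ itself is open via the witness $s(y)$ provided by (F4); and binary intersections use $m$: if $a_1,a_2$ witness openness of $U_1,U_2$ at a common point $y$, then (F5) gives $p(m(a_1,a_2))=y$ and (F6) gives $N_{m(a_1,a_2)}\subseteq N_{a_1}\cap N_{a_2}\subseteq U_1\cap U_2$. It is also worth recording, using only (F1)--(F3), that each $N_a$ is itself open: for $b\in N_a$, the element $\partial(a,b)$ has base $b$ by (F1), and (F3) gives $N_{\partial(a,b)}\subseteq N_a$.

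Continuity of base maps is the next step. Given a fibrous morphism $(f,f^{*})$ and an open $U'\subseteq B'$, for each $y\in f^{-1}(U')$ I pick a witness $a'$ for $f(y)\in U'$; then $a=f^{*}(a',y)\in A$ satisfies $p(a)=y$ by (\ref{F7}) and $N_a\subseteq f^{-1}(U')$ by (\ref{F8}), whence $f^{-1}(U')\in\tau$. Since Definition \ref{defC} identifies parallel fibrous morphisms that agree on their base, $F$ descends unambiguously to the quotient. The identity $FG=1$ is then routine: starting from $(B,\tau)$, the relation produced by $G$ yields $N_{(U,x)}=U$, so the $FG$-open subsets of $B$ are exactly those $V$ such that every $y\in V$ admits a $\tau$-open neighbourhood inside $V$, i.e.\ the elements of $\tau$.

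For $GF\sim 1$ I would apply, objectwise, the characterisation of equivalent fibrous preorders given in the proposition immediately following Definition \ref{defC}. Given $\mathbf{A}=(R,A,B,p,\partial)$ and writing $GF\mathbf{A}=(R',A',B,p',\partial')$, define $\varphi\colon A\to A'$ by $\varphi(a)=(N_a,p(a))$, well defined because $N_a$ is open and contains $p(a)$ by (F2); condition (F9) then reduces to the tautology $b\in N_a\Leftrightarrow aRb$. Conversely, define $\gamma\colon A'\to A$ by choosing, for each $(U,x)\in A'$, some $a\in A$ with $p(a)=x$ and $N_a\subseteq U$, which exists by the very definition of $\tau$; condition (F10) then follows from $N_{\gamma(U,x)}\subseteq U$. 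The main obstacle in the whole argument is the topology-defining step: recognising that axioms (F4), (F5) and (F6) are exactly the data needed to furnish, respectively, the total space, the base of the intersection and the inclusion property that force $\tau$ to be closed under binary intersections; once $F$ is set up the remaining verifications are essentially formal.
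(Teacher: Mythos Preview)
Your proposal is correct and follows essentially the same approach as the paper: the same definitions of $F$ and $G$, the same use of (F4)--(F6) to verify that $\tau$ is a topology, the same continuity argument via $f^{*}$, and the same $\varphi$ and choice-dependent $\gamma$ to establish $GF\sim 1$. The only minor difference is organisational: you record early that each $N_a$ is open (using (F1)--(F3)), whereas the paper establishes this fact inside the verification that $\varphi$ is well defined.
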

\begin{proof}
The functor $G$ is defined as in Proposition \ref{exampletopspaces},
together with $s(x)=(B,x)$ and $m((U,x),(V,x))=(U\cap V,x)$. The
functor $F$ associates to each spatial fibrous preorder
\[\xymatrix{R\ar[r]^{\partial}&A\ar@<.5ex>[r]^{p}&B\ar@<.5ex>[l]^{s}},\quad
\xymatrix{A\times_{\langle p,p\rangle}A\ar[r]^(.65){m}&A}\] the
topological space $(B,\tau)$ in which $\tau$ is defined by
\begin{equation}\label{eqopen}\mathcal{O}\in\tau\Leftrightarrow \forall y\in \mathcal{O}\
\exists a\in A,\, p(a)=y,\, N(a)\subseteq\mathcal{O},\end{equation}
with $N(a)=\{y\in B\mid aRy\}$. If $(f,f^*)$ is a morphism in
$SpFibPreord$ then $F(f,f^*)=f$. In order to see that the functor
$F$ is well defined we observe:
\begin{enumerate}
\item[(a)] The empty set is in $\tau$. Indeed it is an immediate consequence of
$(\ref{eqopen})$.
\item[(b)] If $\mathcal{O},\mathcal{O'}\in \tau$ then
$\mathcal{O}\cap\mathcal{O'}\in\tau$. Indeed if $x\in
\mathcal{O}\cap\mathcal{O'}$ then by $(\ref{eqopen})$ there exist
$a,a'\in A$ such that $p(a)=p(a')=x$ and $N(a)\subseteq
\mathcal{O}$, $N(a')\subseteq \mathcal{O'}$. Using $(F5)$ and $(F6)$
we obtain $m(a,a')$ such that $pm(a,a')=x$ and $N(m(a,a'))\subseteq
N(a)\cap N(a')$, yielding  the desired result that for every
$x\in\mathcal{O}\cap\mathcal{O'}$ there exists $m(a,a')\in A$ such
that $pm(a,a')=x$  and
$N(m(a,a'))\subseteq\mathcal{O}\cap\mathcal{O'}$. This proves that
$\mathcal{O}\cap\mathcal{O'}$ is in $\tau$.
\item[(c)] The fact that $p$ is surjective $(F4)$ implies (in fact is equivalent to the fact) that
$B$ is in $\tau$.
\item[(d)] Again by definition of $\tau$ it is easy to see that it
is closed under arbitrary unions.
\end{enumerate}
Concerning morphisms we have to show that if $(f,f^*)$ is a morphism
in $SpFibPreord$ then $f$ is a continuous map from $(B,\tau)$ to
$(B',\tau')$, assuming that $\tau$ and $\tau'$ are obtained as in
$(\ref{eqopen})$. Suppose $\mathcal{O'}\in\tau'$, we shall prove
$f^{-1}(\mathcal{O'})\in\tau$. Given any $y\in
f^{-1}(\mathcal{O'})$, because $\mathcal{O'}\in\tau'$ and
$f(y)\in\mathcal{O'}$, by $(\ref{eqopen})$ there exists $a'\in A'$
such that $p(a')=f(y)$ and $N'(a')\subseteq\mathcal{O'}$. Now, the
very structure of fibrous morphism gives us an element
$f^{*}(a',y)\in A$ such that (see $(\ref{F7})$ and $(\ref{F8})$)
\[pf^{*}(a',y)=y\] and \[N(f^{*}(a',y))\subseteq f^{-1}(N'(a'))\subseteq
f^{-1}(O'),\] proving thus that  $f^{-1}(O')\in\tau$, whenever
$O'\in\tau'$. And hence $f$ is a continuous map.

It is also immediate to observe that $FG=1_{Top}$. Indeed
\[FG(B,\tau)=(B,\bar{\tau})\] with \[\mathcal{O}\in\bar{\tau}\Leftrightarrow \forall y\in \mathcal{O}, \exists u\in\tau, y\in u\subseteq
\mathcal{O}\] and it is clear that $\bar{\tau}=\tau$.

It remains to prove $GF\simeq 1$. To do that we will show the
existence of two maps
\[\xymatrix{A\ar@<.5ex>[rr]^{\varphi}\ar[d]_{p}&&
GF(A)=\bar{A}\ar@<.5ex>[ll]^{\gamma}\ar[d]^{\bar{p}}\\B\ar@{=}[rr]&&B}\]
such that the diagram above commutes and for every $a\in A$,
$(u,x)\in\bar{A}$ and $b\in B$,
\[\varphi(a)\bar{R}b\Rightarrow aRb\]\[\gamma(u,x)Rb\Rightarrow
(u,x)\bar{R}b.\] The map $\varphi$ is given by
\[\varphi(a)=(N(a),p(a))\] while (assuming the axiom of choice)
\[\gamma(u,x)=a_{u,x}=a_x\] where $a_x\in A$ is any element of $A$ such that
$p(a_x)=x$ and $N(a_{u,x})\subseteq u$, which exists by definition
of $(u,x)\in \bar{A}$. Recall that $(u,x)\in \bar{A}$ if and only if
$x\in u$ and  $u\subseteq B$ is such that \[\forall y\in u,\exists
a\in A,\quad p(a)=y, N(a)\subseteq u.\]

In order to prove that $\varphi(a)$ is well defined we observe: by
$(F2)$, $aRp(a)$, and so $p(a)\in N(a)$; now suppose $y\in N(a)$,
this means $aRy$, and if we put $a'=\partial(a,y)$ then by $(F1)$
$p(a')=y$ and by $(F3)$ we know that \[N(a')\subseteq N(a),\]
showing that $\varphi$ is well defined.

Finally we observe that \[(N(a),p(a))\bar{R}b\Leftrightarrow b\in
N(a) \Leftrightarrow aRb\] and \[a_{u,x}Rb\Leftrightarrow b\in
N(a_{u,x})\subseteq u\Rightarrow (u,x)\bar{R}b,\] which concludes
the proof.
\end{proof}

\section{Some examples}\label{examples}

We conclude with a list of examples to illustrate how this notion of
spatial fibrous preorder can be used to work with arbitrary
topological spaces described by basic neighbourhood relations.

\subsection*{Normed vector spaces}
Let $B=(B,+,0)$ be an abelian group and $I\subseteq B$ a subset of
$B$ together with a map $h\colon{I\to \N}$ such that:
\begin{enumerate}
\item $0\in I$;
\item it $nn'a\in I$ then $na,n'a\in I$;
\item if $na\in I$ and $h(a)a'\in I$ then $n(a+a')\in I$.
\end{enumerate}
We construct a spatial fibrous preorder as follows:
\begin{eqnarray*} A=\N\times B \\ p(n,x)=x,\quad s(x)=(1,x),\quad m(n,n',x)=(nn',x)\\
 (n,x)Ry\Leftrightarrow n(x-y)\in I\\
 \partial((n,x),y)=\left\{\begin{array}{c}
(h(x-y),y)\text{ if } x\neq y \\
(n,x)\text{ if } x=y
\end{array}\right.
\end{eqnarray*}
A concrete example is the case when $B$ is a normed vector space
with $I=\{x\in B\mid \norm(x)<1\}$ and $h(x)\in \N$ is such that
$\frac{1}{h(x)}<\frac{1}{k}-\norm(x)$ with $k$ the unique natural
number such that \[\frac{1}{k+1}\leq \norm(x)<\frac{1}{k}\] if
$\norm(x)\neq 0$.
\subsection*{Metric Spaces}
The intuitive idea of working with metric spaces with open balls of
radius $\frac{1}{n}$ for some natural number $n$ may be formalized
in terms of spatial fibrous preorders in the following way.  Let
$(B,d)$ be a metric space and consider
\[\begin{aligned}
A=\mathbb{N}\times B\\
p(n,x)=x,\quad s(x)=(1,x)\\
m((n,x),(n',x))=(nn',x)\\
(n,x)Ry\Leftrightarrow d(x,y)<\frac{1}{n}\\
\partial((n,x),y)=(k,y)
\end{aligned}\]
with $k\in\mathbb{N}$ any number greater than $\frac{n}{1-nd(x,y)}$.

This shows the existence of a functor (actually and embedding) from
the category of metric spaces and continuous maps into the category
of spatial fibrous preorders and fibrous morphisms. If
$f\colon{(B,d)\to (B',d')}$ is a continuous map we may define
$f^*((n,f(y),y))$ as the pair $(k,y)$ with $k$ any natural number
such that for every $z\in Z$
\[d(z,y)<\frac{1}{k}\Rightarrow d'(f(z),f(y))<\frac{1}{n}.\]

We may now ask for a characterization of those spatial fibrous
preorders which arise from a metric space in the same way as above.
As it is well-known in point-set topology there is not a simple
answer to that question. Nevertheless the notion of a \emph{natural}
space is a good substitute to the one of a metric space.

\subsection*{\emph{Natural} spaces}

Generalizing the construction for metric spaces from above we
observe that more in general, every map $N\colon{\N\times B\to
P(B)}$ such that
\begin{enumerate}
\item $\forall n\in \N, \forall x\in B,\quad x\in N(n,x)$
\item $N(nn',x)\subseteq N(n,x)\cap N(n',x)$
\item $N(n,x)\subseteq \{y\in B\mid \exists n'\in\N,\quad N(n',y)\subseteq N(n,x)\}$
\end{enumerate}
gives a spatial fibrous  preorder as follows: $A=\N\times B$,
$p(n,x)=x$, $s(x)=(1,x)$, $m(n,n',x)=(nn',x)$,
\[(n,x)Ry\Leftrightarrow y\in N(n,x), \] and $\partial(n,x,y)=n'$
where $n'$ is such that $N(n',y)\subseteq N(n,x)$ which exists by
definition of $N$.

For the purpose of this note a  topological space is said to be
\emph{natural} if it admits a base of neighbourhoods of the form
above. It is clear from the above that every metrizable space is
natural.

In this case, a morphism $f\colon{B\to B'}$ is continuous if for
every $n\in\N$ and $y\in B$ there exists $k\in\N$ such that
$f(N(k,y))\subseteq N'(n,f(y))$.

The particular the case of metric spaces is recaptured by letting
$N(n,x)=\{y\in B\mid d(x,y)<\frac{1}{n}\}$.

However, as it is well known, not every natural space is metrizable.

\subsection*{Tangent disk topology}
An example of a well known non-metrizable space which is natural in
the sense above is the so-called tangent disk topology. In this case
$B=\{(x,y)\in\R^2\mid y\geq 0\}$ and
\[N(n,(x,y))=\left\{
\begin{array}{lcr}
    \{(x_1,x_2)\mid d((x_1,x_2),(x,y))<\frac{1}{n}\} & \text{if} & y>0\\
    \{(x_1,x_2)\mid d((x_1,x_2),(x,y))<\frac{1}{n}\}\cup \{(x,y)\} & \text{if} &
    y=0.
\end{array}
\right.\]

\subsection*{The Cantor set}
The Cantor Set is another well known concrete example that fits in
the setting of natural spaces: in this case we let \[B=\{u\mid
u\colon{\N\to \{0,2\}}\}\] and
\[N(n,u)=\{w\in B\mid w(i)=u(i),\quad i\leq n\}.\]

More generally we may consider as $B$ any set of the form $\{u\mid
u\colon{\N\to X}\}$ with $X$ an arbitrary set.

\subsection*{The $p$-adic topology} The $p$-adic topology on the set of integers is obtained as \[N(n,x)=\{z\in\mathbb{Z}\mid z=x+kp^n,\quad
k\in\mathbb{Z}\}\] with $B=\mathbb{Z}$.

 If instead of a  map $N\colon{\N\times B\to P(B)}$ we
consider a family of binary relations $R_n$ over $B$, then we have
examples of the following type, with $I=\N$.

\subsection*{Indexed families of preorders}

A more general example is obtained as follows. Let $I$ be a monoid,
$B$ a set, $(R_i)_{i\in I}$ a family of binary relations
$R_i\subseteq B\times B$, and $(\partial_i\colon{R_i\to I})$ a
family of maps such that:
\begin{enumerate}
\item $xR_i x$
\item $xR_{ij}y \Rightarrow xR_i y \& xR_j y$
\item $xR_i b \& bR_{\partial_i(x,b)}y \Rightarrow xR_i y$
\end{enumerate}
for all $i,j\in I$ and $x,y,b\in B$.

In this case we construct a fibrous preorder as follows: $A=I\times
B$, $p(i,x)=x$, $s(x)=(1,x)$, $m(i,j,x)=(ij,x)$,
\[(i,x)Ry\Leftrightarrow xR_i y\] and
$\partial(i,x,y)=(\partial_i(x,y),y)$ if $xR_i y$.

For morphisms (from $(\partial_i\colon{R_i\to I})_i\in I$ to
$(\partial'_{i'}\colon{R'_{i'}\to I'})_{i'}\in I'$) we have a map
$f\colon{B\to B'}$ and a family of maps $f_j\colon{B\to I'}_j\in I'$
such that \[xR_{f_j(x)}y\Rightarrow f(x)R'_jf(y).\]

\section{Conclusion}

In this note we introduced the notions of (spatial) fibrous preorder
and fibrous morphism, showing that the category of topological
spaces is the quotient category of the category of spatial fibrous
preorders, obtained by identifying two fibrous morphisms whenever
they have the same underlying map. The examples  show that this
notion provides a convenient setting to work with the intuitive
notion of base of open neighbourhoods. However, as explained in the
introduction, the main motivation that leads to the definition of
fibrous preorder was the purpose of finding a purely categorical
definition of topological space. Future work (\cite{NMF_IFP}) will
specify the internal version of a fibrous preorder, by replacing the
relation $R\subseteq A\times B$ with a jointly monomorphic pair of
morphisms and by giving the appropriate translation of axioms
(F1)-(F3) and (F4)-(F6). In particular, the additional structure of
spatial fibrous preorder is nothing but a comonoid structure in the
monoidal category of fibrous preorders and fibrous morphisms, with
an appropriate tensor product. Further studies will then take place
in $FibPreOrd(\C)$ and $SpFibPreord(\C)$ for an arbitrary category
$\C$ with finite limits. For instance if $\C$ is the category of
finite sets then $Preord(\C)\simeq SpFibPreord(\C)\simeq
FibPreord(\C)$, as easily follows from Proposition \ref{prop_umap}.

\bibliographystyle{amsplain}

\end{document}